\newtheorem{thm}{Theorem}[section]
\newtheorem{prop}[thm]{Proposition}
\theoremstyle{definition}
\newtheorem{defn}[thm]{Definition}
\newtheorem{ex}[thm]{Example}
\newtheorem{exs}[thm]{Examples}
\theoremstyle{remark}
\newtheorem{rem}[thm]{Remark}
\newtheorem{rems}[thm]{Remarks}
\newcommand{\brr}[1]{\left[#1\right]}
\newcommand{\brg}[1]{\llbracket#1\rrbracket}
\newcommand{\ZZ}{\mathbb{Z}}
\newcommand{\ds}{\displaystyle}
\newcommand{\diff}{\rm{d}}
\newcommand{\al}{\alpha}
\newcommand{\be}{\beta}
\newcommand{\A}{A}                      
\renewcommand{\gg}{\mathfrak{g}}        
\newcommand{\hh}{\mathfrak{h}}          
\DeclareMathOperator{\ad}{ad}           
\DeclareMathOperator{\End}{End}         
\begin{document}

\title{Hom-Lie algebroids}

\author{Camille Laurent-Gengoux
and Joana Teles }
\thanks{Lab. de Math. et Appl., UMR 7122,
Universit\'e de Lorraine, \^Ile du Saulcy
57045, Metz,
France \emph{ and} CMUC, Department of Mathematics, University of Coimbra, 3001-454 Coimbra, Portugal}
\thanks{Both authors where supported by the Centre for Mathematics of the
University of Coimbra and Funda\c{c}\~{a}o para a Ci\^{e}ncia e a Tecnologia,
through European program COMPETE/FEDER and by FCT (Funda\c{c}\~{a}o para a Ci\^{e}ncia e a Tecnologia) grant PTDC/MAT/099880/2008.}

\begin{abstract}
We define hom-Lie algebroids, a definition that may seem cumbersome at first, but which is justified, first, by a one-to-one corespondence with hom-Gerstenhaber algebras, a notion that we also introduce, and several examples, including hom-Poisson structures.

\end{abstract}

\maketitle

\section{Introduction}

There is an increasing interest for hom-structures, which are, to make a long story short,
Lie algebra-like structures, equipped with an additional map, say $\alpha$, and in which the cubic identities (e.g. the associativity or
the Jacobi conditions) are replaced by a relation of the same form, in which the first variable, say $x$,
is replaced by $\alpha (x)$, so that the henceforth obtained condition is
always true when restricted to the kernel of that map.
Originally, the study of hom-Lie algebras, initiated by  \cite{HLS} showed a
natural occurrence of this notion while studying cocycles of the Virasoro algebra.
In the following years, Makhlouf, Silvestrov and their coauthors \cite{MS} have
showed that several classical algebraic structures
admit natural generalizations when, instead of just a vector space,
we start with a vector space and an automorphism of it, leading to investigate hom-associative algebras \cite{MS},
hom-Jordan algebras \cite{MS2010}, admissible algebras \cite{MS2009}, hom-Poisson algebras \cite{MS2}, to cite a few.
The interest in hom-Poisson structure is likely to grow again due to the recent thesis of Olivier Elchinger \cite{E}, who introduces quantization of Hom-Poisson structures, giving, in particular, explicit formulas for the Moyal product, since this raises the question of integration of hom-Lie algebroids.

Our purpose is to introduce hom-Lie algebroids.
We would like to insist that it is not straightforward at all to see what this definition should be.
This should not come as a surprise: a definitive notion of Hom-group, allowing to state Lie I, II and III theorems,
is still to be found.
In particular, there is no such a thing as a hom-Lie groupoid that could give us a hint.
To derive a definition that makes sense, we indeed had to go through the notion of hom-Gerstenhaber algebra, but even there
there was an unexpected phenomenon,  a hom-Gerstenhaber algebra is not  hom-associative,
as one could have expected, hence defining
a hom-Lie algebroid does not reduce simply adding the prefix hom- to classical definitions and results in a systematic manner.

We would like to thank Raquel Caseiro for several interesting discussions.

\section{Hom-Lie algebras and hom-Poisson algebras}

Given $\gg$ a vector space and a bilinear map $ \brr{\, , \, }: \gg \otimes \gg \to \gg$, we call \textbf{automorphism of
$(\gg,\brr{\, , \, } )$} a linear map $\al: \gg \to \gg$ such that
$$
\al (\brr{x,y}) = \brr{\al (x), \al (y)}
$$
for all $x,y \in \gg$.

\begin{defn}\label{def:hom:Lie:algebra} \cite{HLS}
A \textbf{hom-Lie algebra} is a triple  $(\gg, \brr{\, , \, }, \al)$ with $\gg$ a vector space equipped with a
skew-symmetric bilinear map $ \brr{\, , \, }:\gg \otimes \gg \to \gg$ and an automorphism
$\al$ of $(\gg,\brr{\, , \, })$ such that:
\begin{equation}
\brr{\al (x),\brr{y,z}}+\brr{\al (y),\brr{z,x}}+\brr{\al
(z),\brr{x,y}}=0, \quad \forall x,y,z\in\gg  \quad \hbox{(hom-Jacobi identity)}. \label{eq:hom:Jacobi:algebra}
\end{equation}
A \textbf{morphism} between hom-Lie algebras $(\gg,\brr{\, , \, }_\gg,\al)$ and $(\hh,\brr{\, , \, }_\hh,\be)$ is a linear map $\psi:\gg \to \hh$
such that $\ds \psi(\brr{x,y}_\gg)=\brr{\psi( x),\psi (y)}_\hh$ and $\psi (\al(x))=\be(\psi(x))$
for all $x,y \in \gg$.
When $\hh$ is a vector subspace of $\gg$ and $\psi$ is the inclusion map, one speaks of \textbf{hom-Lie subalgebra}.
\end{defn}

In a similar fashion, one defines \textbf{graded hom-Lie algebras} to be triples $(\gg, \brr{\, , \, }, \al)$ with $\gg=\ds{ \oplus_{i \in \ZZ} \gg_i}$
a graded vector space, $ \brr{\, , \, }:\gg \otimes \gg \to \gg$
a graded skew-symmetric bilinear map of degree $0$ and $\al : \gg \to \gg$ an automorphism of  $(\gg,\brr{\, , \, })$ of degree $0$ satisfying
for all $x \in \gg_i,y \in \gg_j,z\in\gg_k$:
\begin{eqnarray}\label{eq:hom:graded:Jacobi:algebra}
\hbox{\small{$(-1)^{ik}$}} \brr{\al (x),\brr{y,z}}+ \hbox{\small{$(-1)^{ji}$}} \brr{\al (y),\brr{z,x}}+\hbox{\small{$(-1)^{kj}$}}  \brr{\al (z),\brr{x,y}}=0,   \quad \hbox{(graded hom-Jacobi identity)}. \nonumber
\end{eqnarray}

Of course, these definitions make sense for finite dimensional or infinite dimensional vector spaces indifferently.

\begin{ex}\label{ex:composition}
(See e.g. \cite{MS}).
Given a vector space $\gg$ equipped with a skew-symmetric bilinear map $ \brr{\, , \, }:\gg \otimes \gg
\to \gg$ and an automorphism $\al:\gg\to \gg$ of $(\gg, \brr{\, , \, })$, define
$ \brr{\, , \, }_\al:\gg \otimes \gg
\to \gg$ by
$$ \brr{x,y}_\al=\al (\brr{x,y}), \quad \hbox{ $\forall x,y \in \gg$.} $$
 Then $(\gg, \brr{\, , \, }_{\al}, \al)$ is a hom-Lie algebra if and only if the restriction of $\brr{\, , \, }$
to the image of $\al^2 $ is a Lie bracket.
In particular, hom-Lie structures are naturally associated to Lie algebras equipped with a Lie algebra automorphism \cite{Yau09}.
Such hom-Lie structures are said to be \textbf{obtained by composition}.
\end{ex}

\begin{defn} \label{hom-associative}\cite{MS}
A \textbf{hom-associative algebra} is a triple $(\A, \mu, \al)$ consisting of a  vector space $\A$, a bilinear map $\mu: \A \otimes \A \to \A$ and an automorphism $\al$ of $( \A, \mu) $ satisfying
$$
\mu(\al(x), \mu(y,z))= \mu (\mu(x,y), \al (z)), \quad \forall x,y,z \in \A \hbox{ (hom-associativity)}.
$$
\end{defn}

\begin{ex}\cite{Yau09}
As in example \ref{ex:composition}, given $(\A, \mu)$ an associative algebra and $\al: \A \to \A$ an algebra automorphism, the
triple $(\A, \mu_{\al}:= \al \circ \mu , \al)$ is a hom-associative algebra, said again to be \textbf{obtained by composition}.
\end{ex}

As one can expect, the commutator of a hom-associative algebra is a hom-Lie algebra:

\begin{ex} \label{ex:composition2}
\cite{MS} For every hom-associative algebra $(\A, \mu, \al)$ (see definition \ref{hom-associative} above),
the triple $(\A, \brr{\, , \, }, \al)$  is a a hom-Lie algebra, where
$$
\brr{x,y}:= \mu(x,y)- \mu (y,x)
$$
for all $x,y \in \A$.
\end{ex}

A Poisson algebra being a space endowed with an associative and a Lie product, satisfying some compatibility relation,
the next definition is perfectly natural:

\begin{defn} \cite{MS2}
A \textbf{hom-Poisson algebra} is a quadruple $(\A, \mu, \{ \, ,\, \}, \al)$ consisting of a vector space $\A$,  bilinear maps $\mu: \A \otimes \A \to \A$ and $ \{ \, , \, \} : \A \otimes \A \to \A$ and a linear map $\al: \A \to \A$ such that:
\begin{enumerate}
\item $(\A, \mu, \al)$ is a commutative hom-associative algebra,
\item $(\A, \{ \,,\, \}, \al)$ is a hom-Lie algebra,
\item $ \{ \al(x), \mu(y,z) \} = \mu (\al (y), \{x, z\} ) + \mu ( \{ x, y \}, \al(z))$, for all $ x,y,z \in \A$.
\end{enumerate}
\end{defn}

There is, however, a slightly related notion, that shall be useful in the sequel:

\begin{defn}
A \textbf{purely hom-Poisson algebra} is a quadruple $(\A, \mu, \{ \, ,\, \}, \al)$ consisting of a vector space $\A$,  bilinear maps $\mu: \A \otimes \A \to \A$ and $ \{ \, , \, \} : \A \otimes \A \to \A$ and a linear map $\al: \A \to \A$ such that:
\begin{enumerate}
\item $(\A, \mu)$ is a commutative associative algebra,
\item $(\A, \{ \,,\, \}, \al)$ is a hom-Lie algebra,
\item $ \{x, \mu(y,z) \} = \mu (\al (y), \{x, z\} ) + \mu ( \{ x, y \}, \al(z))$, for all $ x,y,z \in \A$.
\end{enumerate}
\end{defn}


\begin{ex}\label{Poisson} \cite{Yau10}
Let $(\A, \mu , \{ \, ,\, \})$ be a Poisson algebra and
$\al : \A \to \A$ a Poisson automorphism, then
the quadruple $(A, \mu_{\al}:= \al \circ \mu , \{\, ,\, \}_{\al}:= \al \circ \{ \, ,\, \}, \al)$ is a hom-Poisson algebra,
said to be \textbf{obtained by composition}. It is indeed enough to assume that $\{ \, ,\, \}$ (resp. $\mu$) is a Lie
bracket (resp. an associative product) when restricted to the image of $\al^2$.

Also,  $(A, \mu, \{\, ,\, \}_{\al}:= \al \circ \{ \, ,\, \}, \al)$ is a purely hom-Poisson algebra.
\end{ex}

\begin{ex}\label{Poisson2}
In particular, given $(M, \pi)$ a manifold equipped with a  bivector field $ \pi$, and $\varphi: M \to M$ a smooth map,
then a hom-Poisson structure on $C^\infty(M)$ can be obtained by composition provided that $\varphi$ preserves the bivector field $ \pi $
(i.e. $\pi_{\varphi(m)} = (\wedge^2 T_m \varphi) (\pi_m)$ for all $m \in M$) and that the Schouten-Nijenhuis bracket  $[\pi,\pi]$  is a trivector field that vanishes on $\varphi^2 (M) \subset M$.
Under these conditions, $(C^\infty(M), \mu_{\varphi^*}, \{ \, ,\, \}_{\varphi^*}= \varphi^* \circ \{ \, ,\, \}, \varphi^*)$ is a hom-Poisson algebra where $\mu$ is the usual product on $C^\infty(M)$ (and $(C^\infty(M), \mu, \{ \, ,\, \}_{\varphi^*}, \varphi^*)$ is a purely hom-Poisson algebra).
\end{ex}

Example \ref{Poisson2} makes the following definition natural: a triple $(M,\pi,\varphi)$,
with $\pi$ a bivector field on a manifold $M$ and $\varphi: M \to M$ a smooth map,
is called a \textbf{hom-Poisson manifold} when $\varphi$ preserves the bivector field $ \pi $
 and that the Schouten-Nijenhuis bracket  and $[\pi,\pi]$ vanishes on $\varphi^2 (M) \subset M$.

\begin{ex}\label{ex:dualhomLiealgebra}
Here are examples of hom-Poisson algebras that are not obtained by composition in general, see \cite{BEM} for an alternative description.
Let $(\gg,\brr{\, , \, }, \al)$ be a hom-Lie algebra.
Equip its symmetric algebra $S(\gg)$ with the product $\mu_{\al}:={\al} \circ \mu $, where $\mu (x,y) = x \odot y$ is the symmetric product
and ${\al}: S(\gg) \to S(\gg) $ stands for the automorphism of $(S(\gg),\mu)$ given by $
{\al} (x_1 \odot \cdots \odot x_n):= \al (x_1) \odot \cdots \odot \al (x_n)$ for all $x_1, \dots, x_n \in \gg$. The quadruple $(S(\gg), \mu_{\al} , \{\, , \, \}, {\al})$ is a hom-Poisson algebra where
$$
\{x_1 \odot\dots  \dots \odot x_p ,y_1 \odot \dots \odot y_q\} := \sum_{i=1}^p \sum_{j=1}^q [x_i, y_j] \odot \al \left(x_1 \odot\dots \widehat{x}_i  \odot \dots \odot x_p \odot y_1 \odot \dots  \widehat{y}_j \odot \dots \odot y_q\right),  
$$
for all $x_1,\dots,x_p,y_1, \dots,y_q \in \gg$.
Identifying $S(\gg)$ with polynomial functions on $\gg^*$, we could also write:
      $$
      \{ F, G \} (a)= \left\langle \brr{ \left. \diff F\right|_{\alpha^* (a)},\left. \diff G\right|_{\alpha^* (a)} }, a \right\rangle
      $$
for all polynomial functions  $F,G $ on $\gg^*$ and all $a \in \gg^*$, with the understanding that the differential of a function of
$\gg^*$, a priori an element in $T^* \gg^*$ is considered as an element in $\gg$.
It deserves to be noticed that $\gg^*$ is \emph{not} a hom-Poisson manifold in general.
Also, it is not clear how we can associate a purely hom-Poisson algebra structure on $S(\gg)$.
\end{ex}

\section{Hom-Gerstenhaber algebras}

Lie algebroids structures on a vector bundle $A \to M$ are in one-to-one correspondence with Gerstenhaber algebra structures
on $\Gamma(\wedge^\bullet A [-1])$, see e.g. \cite{KSM1, KSM2, McKX}, making natural the idea of defining
hom-Lie algebroids through the following object:

\begin{defn}
A \textbf{hom-Gerstenhaber algebra} is a quadruple $({\mathcal A} = \oplus_{i \in {\mathbb Z}} \mathcal{A}_i , \wedge, \brg{\, ,\, }, \al)$ where  $({\mathcal A} = \oplus_{i \in {\mathbb N}} \mathcal{A}_i , \wedge)$ is  a graded commutative associative algebra, $\al$ is an automorphism of $({\mathcal A}  , \wedge)$ of degree $0$ and $\brg{\, , \, }: {\mathcal A} \otimes {\mathcal A} \to {\mathcal A}$ is a bilinear map of degree $-1$
such that:
\begin{enumerate}
  \item $(\mathcal{A}[1],\brg{\, , \, },\al)$ is a graded hom-Lie algebra
(as usual, $ \mathcal{A}[1]$ refers to the graded vector space whose component of degree $i$ is ${\mathcal A}_{i+1}$, for all $i \in {\mathbb Z}$);
  \item the \textbf{hom-Leibniz rule} holds:
    $$
    \brg{X, Y \wedge Z} = \brg{X,Y} \wedge \al(Z) + \hbox{\small{$(-1)^{(i-1) j}$}} \al(Y) \wedge \brg{X,Z}, 
  $$
  for all $X \in \mathcal{A}_i,Y \in \mathcal{A}_j,Z \in  \mathcal{A}$.
\end{enumerate}
\end{defn}

\begin{rem}
Notice that $\wedge$ is assumed to be an associative product, not a hom-associative product,
so that a hom-Gerstenhaber algebra is not an odd version of a hom-Poisson algebra.
But it might be seen as an odd version of a purely hom-Poisson algebra.
\end{rem}

\begin{ex}\label{ex:GerstByCompo}
Given an automorphism $\al$ of a Gerstenhaber algebra $({\mathcal A}  , \wedge, \brg{\, ,\, })$
(i.e. $\al$ is an automophism for both pairs $({\mathcal A}, \wedge)$ and  $({\mathcal A}, \brg{\, ,\, })$), then
$({\mathcal A} , \wedge, \alpha \circ \brg{\, ,\, },\al)$ is a hom-Gerstenhaber algebra, said to be \textbf{obtained by composition}.
Again, it suffices to assume that $ \brg{\, ,\, }$ satisfies the Jacobi identity on the image of $\al^2$.
\end{ex}

\begin{ex}\label{ex:gerstenhaber0}
Let $\gg$ be a vector space equipped with a skew-symmetric bilinear map $ \brr{\, , \, }:\gg \otimes \gg \to \gg$ and  an automorphism $\al$ of $(\gg , \brr{\, , \, })$.
The triple $(\gg, \brr{\, , \, }, \al)$ is a hom-Lie algebra if and only if
the quadruple $(\wedge^\bullet \gg, \wedge,\brg{\, , \,},\al)$ is a  hom-Gerstenhaber algebra where
$$
\brg{x_1 \wedge \dots \wedge x_p, y_1 \wedge \dots \wedge y_q}= \sum_{i=1}^p \sum_{j=1}^q \hbox{\small{$(-1)^{i+j}$}}    \brr{x_i, y_j} \wedge \al(x_1 \wedge \dots \widehat{x}_i  \wedge \dots \wedge x_p \wedge y_1 \wedge \dots  \widehat{y}_j \wedge \dots \wedge y_q)
$$
for all $x_1 , \dots , x_p, y_1 ,\dots , y_q \in \gg$ and
$${\al}(x_1 \wedge \dots \wedge x_p) =  \al(x_1) \wedge \dots \wedge \al(x_p).
$$
The only difficult part is to prove that the graded hom-Jacobi identity holds on $\wedge^\bullet \gg$
if and only if $\brr{\, , \,}$ satisfies the hom-Jacobi identity. This follows from the fact (which follows
from a cumbersome but direct computation) that the hom-Jacobiator, defined as
$$
Jac_{\al}(X,Y,Z):= \hbox{\small{$(-1)^{(i-1)(k-1)}$}} \brg{ \al(X),\brg{Y,Z}} +
\hbox{\small{$(-1)^{(j-1)(i-1)} $}} \brg{\al(Y), \brg{Z,X}}+
\hbox{\small{$(-1)^{(k-1)(j-1)} $}} \brg{\al(Z), \brr{X,Y}}
$$
for all  $X \in \wedge^i \gg,Y  \in \wedge^j \gg,Z \in \wedge^k \gg $, satisfies
\begin{equation}\label{eq:Jacobiator}
Jac_{\al}(XY,Z,T) = \al^2(X) Jac_{\al}(Y,Z,T)  +  \hbox{\small{$(-1)^{ij} $}}\al^2(Y) Jac_{\al}(X,Z,T)
\end{equation}
and is a graded skew-symmetric map, so that it vanishes if and only if its restriction to $\wedge^0 \gg = {\mathbb R} $ and $ \wedge^1 \gg = \gg$ vanishes.
\end{ex}

Let $(\gg, \brr{\, , \, },
\al)$ be a hom-Lie algebra, denote by $\al^s$ the $s$-power of $\al$, $s \geq 1$, i.e.
$$
\al^s = \al \circ \cdots \circ \al \quad (s \mbox{ times}).
$$

For any element $x$ in the hom-Lie algebra $(\gg, \brr{\, , \, },
\al)$ define the $\al^s$-adjoint map $\ad_x^s:\gg \longrightarrow \gg$ by
$\ad_x^s y = \brr{\al^s(x),y}$. Using the hom-Jacobi identity
\eqref{eq:hom:Jacobi:algebra} we obtain
\begin{equation}\label{eq:adjoint}
\ad_{\brr{x,y}}^s \circ \al= \ad_{\al (x)}^s \circ \ad^s_{y} - \ad^s_{\al
(y)} \circ \ad^s_{x}.
\end{equation}

Let us recall the following definition:
\begin{defn}\label{def:hom:Lie:algebra:representation}  \cite{Sh}
A \textbf{representation} of a hom-Lie algebra $(\gg, \brr{\, , \, }, \al)$ on a vector space $V$
is a pair $(\rho,\al_V)$ of linear maps
 $\rho: \gg \to \mathfrak{gl}(V)$, $\al_V: V  \to V$ such that: 
\begin{eqnarray}
&& \rho(\al(x)) \circ \al_V  =  \al_V  \circ \rho(x) \label{rho:A} \\
&& \rho(\brr{ x,y}) \circ  \al_V  = \rho(\al(x)) \circ \rho(y) - \rho (\al(y)) \circ \rho(x), \label{rho:Jacobi}
\end{eqnarray}
for all $x,y \in \gg$.
\end{defn}

\begin{exs}

{\bf a.} \cite{Sh} The $\al^s$-adjoint map defines a representation $(ad^s, \al)$ of a hom-Lie algebra  $(\gg, \brr{\, , \, },
\al)$ on the vector space $\gg$.

\noindent
 {\bf b.} Let $(\rho, \al_V )$ be a representation of the hom-Lie algebra  $(\gg, \brr{\, , \, },
\al)$ on the vector space $V$. Then $(\wedge^\bullet \gg \otimes S^\bullet (V), \wedge , \brg{\, , \,}, \al )$
is a hom-Gerstenhaber algebra  where
$\al :  \wedge^\bullet \gg \otimes S^\bullet(V) \to \wedge^\bullet \gg \otimes S^\bullet(V)$ is defined as
$\al (x_1 \wedge \dots \wedge x_p \otimes v_1 \odot \dots \odot v_q)= \al(x_1) \wedge \dots \wedge \al(x_p) \otimes \al_V (v_1) \odot \dots \odot \al_V (v_q)
$,
the bracket $\brg{\, ,\, }$  being the hom-Gerstenhaber bracket whose restriction to $\wedge^0 \gg \otimes S(V) $ vanishes,
whose restriction to  $\wedge^\bullet \gg \otimes S^0 (V) \simeq  \wedge^\bullet \gg $ is as in example \ref{ex:gerstenhaber0}, and such that:
$$
\brg{x,  v_1 \odot \dots \odot v_q}= \sum_{i=1}^q \rho(x)(v_i)\odot {\al}( v_1 \odot \dots \odot\hat{v}_i \odot \dots \odot v_q),
$$
for all $x \in \gg$ and $ v_1, \dots, v_q \in V$.
\end{exs}

\begin{defn}
Given an algebra automorphism $\al$ of a commutative associative algebra $A$, we call \textbf{$\al$-derivation} a map $\delta:A \to A$
which satisfies
$$
\delta (FG) = \al(F)  \delta (G) +  \al(G)  \delta (F)
$$
for all $F,G \in A$.

We denote by $der_{\al}(A)$ the set of all $\al$-derivations.
\end{defn}

\begin{rem}
Let $M$ be a manifold and $\varphi:M \to M$ a smooth map. Then,
$der_{\varphi^*}(C^\infty (M))$ can be identified with $ \Gamma(\varphi^! TM)$,
by mapping a section $ X \in \Gamma(\varphi^! TM)$
to the $\varphi^*$-derivation mapping a function $F $ to the function whose value at $m \in M $
 is $ X_m \diff_{\varphi (m)} F $, with the understanding that $ X_m$ must be considered as an element in
 $T_{\varphi (m)} M$.
 \end{rem}

\begin{prop}\label{prop:gerstenhabergivesanchor}
For every  hom-Gerstenhaber algebra  $({\mathcal A} = \oplus_{i \in {\mathbb N}} \mathcal{A}_i , \wedge, \brg{\, , \, }, \al)$,
denote by  $\rho : \mathcal{A}_1 \longrightarrow \End (\mathcal{A}_0)$ the map given by
$ \rho(X) [F] : = \brg{X,F} $
for all $F \in  {\mathcal{A}_0}$.
Then $\mathcal{A}_0$ is a commutative associative algebra, $\left. \al\right|_{\mathcal{A}_0}$ is an algebra
automorphism of $ \mathcal{A}_0$, $F \mapsto \rho(X)(F)$ is,  for all $X \in {\mathcal A}_1$, a $ \left. \al\right|_{\mathcal{A}_0}$-derivation of
$ \mathcal{A}_0$,
the triple $(\mathcal{A}_1, \left. \brg{ \, , \, }\right|_{\mathcal{A}_1 \times \mathcal{A}_1}, \left. \al\right|_{\mathcal{A}_1} )$
is a hom-Lie algebra,
and  $(\rho,\left. \al\right|_{\mathcal{A}_0})$ is a representation of $(\mathcal{A}_1, \left. \brg{ \, , \, }\right|_{\mathcal{A}_1 \times \mathcal{A}_1}, \left. \al\right|_{\mathcal{A}_1} )$ on  $\mathcal{A}_0$.
\end{prop}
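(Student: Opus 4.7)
The plan is to unpack each of the five assertions by specializing the hom-Gerstenhaber axioms to the lowest-degree pieces, being careful with the degree shift $\mathcal{A}[1]$ and the sign bookkeeping it forces.

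First I would dispose of items (1) and (2): since $(\mathcal{A}, \wedge)$ is graded commutative associative and $\mathcal{A}_0$ is its degree-$0$ component, the graded sign $(-1)^{|F||G|}$ vanishes identically, so $\wedge$ restricts to an honest commutative associative product on $\mathcal{A}_0$. Because $\al$ is a degree-$0$ automorphism of $(\mathcal{A}, \wedge)$, it preserves $\mathcal{A}_0$ and its restriction is an algebra automorphism. For (3), I would feed $X \in \mathcal{A}_1$ and $Y,Z \in \mathcal{A}_0$ into the hom-Leibniz rule: here $i=1, j=0$, the sign $(-1)^{(i-1)j}$ equals $1$, and the identity
$$\brg{X, Y \wedge Z} = \brg{X, Y} \wedge \al(Z) + \al(Y) \wedge \brg{X, Z}$$
reads exactly $\rho(X)(YZ) = \rho(X)(Y)\,\al(Z) + \al(Y)\,\rho(X)(Z)$, establishing the $\al|_{\mathcal{A}_0}$-derivation property.

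For (4), I would note that because $\brg{\,,\,}$ has degree $-1$ on $\mathcal{A}$, it sends $\mathcal{A}_1 \otimes \mathcal{A}_1$ into $\mathcal{A}_1$, so the restriction to $\mathcal{A}_1$ is well-defined. Elements of $\mathcal{A}_1$ sit in degree $0$ of $\mathcal{A}[1]$, so all the shifted-degree signs in axiom (1) of a hom-Gerstenhaber algebra collapse: graded skew-symmetry becomes ordinary skew-symmetry and the graded hom-Jacobi identity on $\mathcal{A}[1]$ becomes the ordinary hom-Jacobi identity \eqref{eq:hom:Jacobi:algebra} on $\mathcal{A}_1$. The fact that $\al|_{\mathcal{A}_1}$ is an automorphism of $\brg{\,,\,}|_{\mathcal{A}_1 \times \mathcal{A}_1}$ comes for free from $\al$ being an automorphism of the graded hom-Lie algebra $(\mathcal{A}[1], \brg{\,,\,}, \al)$.

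For (5), I would verify both axioms of Definition \ref{def:hom:Lie:algebra:representation}. Condition \eqref{rho:A} evaluated at $F \in \mathcal{A}_0$ reads $\brg{\al(X), \al(F)} = \al(\brg{X, F})$, which is again a consequence of $\al$ being an automorphism of $\brg{\,,\,}$. For \eqref{rho:Jacobi}, I would apply the graded hom-Jacobi identity on $\mathcal{A}[1]$ to $X, Y \in \mathcal{A}_1$ (shifted degree $0$) and $F \in \mathcal{A}_0$ (shifted degree $-1$): every sign $(-1)^{ik}, (-1)^{ji}, (-1)^{kj}$ evaluates to $+1$, yielding
$$\brg{\al(X), \brg{Y, F}} + \brg{\al(Y), \brg{F, X}} + \brg{\al(F), \brg{X, Y}} = 0.$$
Graded skew-symmetry (with shifted degrees) converts $\brg{F, X} = -\brg{X, F}$ and $\brg{\al(F), \brg{X,Y}} = -\brg{\brg{X, Y}, \al(F)}$, delivering $\rho(\brg{X, Y})(\al(F)) = \rho(\al(X))\rho(Y)(F) - \rho(\al(Y))\rho(X)(F)$, which is exactly \eqref{rho:Jacobi}.

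The only real obstacle is the degree-shift sign bookkeeping in the last step; once one is disciplined about writing shifted degrees $|X|-1$ everywhere the hom-Jacobi identity is invoked, the proof is essentially mechanical and reduces to specializing the defining axioms to the lowest nontrivial bidegrees.
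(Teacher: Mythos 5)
Your proof is correct and takes essentially the same route as the paper: the paper dismisses the first four assertions as immediate and, for the representation property, derives \eqref{rho:A} from $\al$ being a degree-$0$ automorphism of $\brg{\, ,\,}$ and \eqref{rho:Jacobi} from the graded hom-Jacobi identity, exactly as you do. Your version merely makes explicit the degree-shift and sign bookkeeping that the paper leaves to the reader.
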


\begin{proof}
Only the last point needs justification. We recover relations (\ref{rho:A}) and (\ref{rho:Jacobi}) by using that $\al$ is an automorphism of $\brg{\, , \,}$ of degree $0$ and   the graded hom-Jacobi identity
as follows,
\begin{eqnarray*}
&& \left. \al\right|_{\mathcal{A}_0}  (\rho(X) [F] )= \rho \left( \left. \al\right|_{\mathcal{A}_1}(X) \right) \left[ \left.  \al\right|_{\mathcal{A}_0}(F) \right] \\
&& \rho(\brg{X,Y}) [\left. \al\right|_{\mathcal{A}_0}(F)] = \left( \rho ( \left. \al\right|_{\mathcal{A}_1}(X) ) \circ \rho(Y)  - \rho(\left. \al\right|_{\mathcal{A}_1}(Y)) \circ \rho(X) \right) [F],
\end{eqnarray*}
 for all $X,Y \in \mathcal{A}_1$ and $F \in \mathcal{A}_0$.
\end{proof}

\section{Definition of hom-Lie algebroid }

We can now, at last, define hom-Lie algebroids:

\begin{defn}\label{def:homLiealgebroid}
\textbf{A hom-Lie algebroid} is a quintuple $( A \to M, \varphi,  \brr{\, , \,},\rho,\al) $, where
$A \to M$ is a vector bundle over a manifold $M$, $\varphi:M \to M$ is a smooth map,
$\brr{\,  , \,}: \Gamma(A) \otimes \Gamma(A) \to \Gamma(A)$ is a bilinear map, called \textbf{bracket},
$\rho : \varphi^! A \to \varphi^! TM $ is a vector bundle morphism, called \textbf{anchor}, and
$\al:\Gamma(A) \to \Gamma(A) $ is a linear endomorphism  of $\Gamma(A) $  such that
\begin{enumerate}
  \item[1.]  $\al(FX) = \varphi^* (F) \al (X)$, 
for all $X \in \Gamma(A), F \in C^\infty(M)$;
  \item [2.]
the triple $(\Gamma(A),\brr{\,  , \,}, \al)$ is a hom-Lie algebra;
   \item[3.] the following hom-Leibniz identity holds:
 $$ [X,FY]= \varphi^*(F) [X,Y] + \rho(X) [F] \, \al(Y), \quad \forall X, Y \in \Gamma(A), F \in C^\infty(M). $$
    \item[4.] $(\rho,\varphi^*)$ is a representation of $(\Gamma(A),\brr{\,  , \,}, \al)$ on $C^\infty(M)$.
\end{enumerate}
\end{defn}

\begin{rems}

{\bf a.} Linear endomorphisms  of $\Gamma(A) $,   $\al:\Gamma(A) \to \Gamma(A) $, satisfying $\al(FX) = \varphi^* (F) \al (X)$
for all $X \in \Gamma(A), F \in C^\infty(M)$ are in one-to-one correspondence with 
vector bundle morphisms from $\varphi^! A$ to $ A$ over the identity of $ M$.
Given $X \in \Gamma(A)$, a section of the pull-back bundle $\varphi^! A$ is given by
mapping $m \in M$ to $ X_{\varphi(m)} \in A_{\varphi(m)} \simeq (\varphi^! A)_m $. Applying a vector bundle morphism
from $\varphi^! A$ to $ A$ over the identity of $M$
to that section yields a section of $A$, and the henceforth defined assignment $\al$ satisfies $\al(FX) = \varphi^* (F) \al (X)$, 
for all $X \in \Gamma(A), F \in C^\infty(M)$. Moreover, every endomorphism of $\Gamma(A)$ satisfying this relation is of that form.

\noindent
{\bf b.} The hom-Leibniz identity implies that, given sections $X,Y$ of $A$, the value of $ \brr{X  , Y}$ at a given point $m \in M$ depends
only on the first jet of $X$ and $Y$ at $\varphi(m)$.

\noindent
{\bf c.} Above, $\rho(X) [F] $ stands for the function on $M$ whose value at $m \in M$
is $\langle \diff_{\varphi(m)} F  , \rho_m(X_{\varphi(m)}) \rangle $ where $\rho_m : (\varphi^! A)_m \simeq A_{\varphi(m)} \to
 (\varphi^! TM)_m \simeq T_{\varphi(m)}M$ is the anchor map evaluated at $m \in M$ and
 $X_{\varphi(m)}$ is the value of the section $X \in \Gamma(A)$ at $\varphi(m) \in M$.
\end{rems}



\begin{ex}
When $\alpha$ (hence $\varphi$) is the identity map, a hom-Lie algebroid $( A \to M, \varphi, \al, \brr{\, , \,},\rho)  $ is simply a Lie algebroid
\cite{McK}.
 A hom-Lie algebra $(\gg,\brr{\, ,\,}, \al)$ is a hom-Lie algebroid over a singleton.
More generally, define an \textbf{action of a hom-Lie algebra}  $(\gg, \brr{\, , \, }, \al)$ on the manifold $M$, equipped with a smooth map $\varphi: M \to M$,
 to be a linear map $\delta $ from $\gg$ to the space of $\varphi^*$-derivations such that
 $(\delta,\varphi^*)$ defines a representation of the hom-Lie algebra $(\gg, \brr{\, , \, },
\al)$ on the vector space $C^\infty(M)$.
Then a hom-Lie algebroid is obtained by considering the trivial vector bundle  $A=M \times \gg \to M$, the linear map $\al_{A}$ mapping $Fc_v \to \varphi^* (F) c_{\al (v)}$
for all $v \in \gg, F \in C^{\infty}(M)$, the anchor $\rho $ mapping $A_{\varphi(m)} \simeq \gg $
to the element of $T_{\varphi (m)}M$ given by the pointwise derivation $F \mapsto \left. \delta(v)[F]\right|_{m}$ and the bracket
 given by:
 $$
 [F c_v , G c_w]= \varphi^*(FG)  c_{[v,w]}+ \varphi^*(F) \rho(v)[G] c_{\al(w)} - \varphi^* (G) \rho(w)[F]  c_{\al (v)}
 $$
for all $F,G \in C^\infty(M)$, $ v,w \in \gg$. In the previous, $c_v,c_w$ denote the constant sections of $M \times \gg \to M $ given by $m \mapsto (v,m) $ and $m \mapsto (w,m) $ respectively. This hom-Lie algebroid is not obtained by composition in general.
\end{ex}

The following theorem is a consequence of proposition \ref{prop:gerstenhabergivesanchor}, and will allow us to give more examples.

\begin{thm}\label{theo:maintheorem}
Let $A \to M$ be a vector bundle, $\varphi: M \to M$ a smooth map, $\al: \Gamma(A) \to \Gamma(A)$ a linear endomorphism
satisfying $\alpha(FX) = \varphi^* (F) \alpha (X)$, 
for all $X \in \Gamma(A), F \in C^\infty(M)$.
Denote by $\al $ again its extension to $\al: \Gamma(\wedge^\bullet A) \to \Gamma(\wedge^\bullet A)$ given by:
\begin{equation}
\label{eq:alphaextented}
\al( F X_1 \wedge \dots \wedge X_p ) = \varphi^* (F)  \al(X_1) \wedge \dots \wedge \al(X_p)
\end{equation}
for all $p \in {\mathbb N}$, $X_1,\dots, X_p \in \Gamma(A)$, $F \in C^{\infty}(M)$.
Then there is a one-to-one correspondence between hom-Gerstenhaber algebra
structures on $(\Gamma(\wedge^\bullet A), \wedge, \brg{\, ,\,} ,\alpha) $ and hom-Lie algebroids structures
 on $(A \to M,\varphi, \brr{\, ,\,} ,\rho ,\alpha) $, obtained as follows:
\begin{enumerate}
\item Given a hom-Gerstenhaber algebra  structure $(\Gamma(\wedge^\bullet A), \wedge, \brg{\, ,\,} ,\alpha) $, we define a bracket $\brr{\, , \,}$
on $\Gamma(A)$ by restriction of $\brg{\, ,\,}  $ to $\Gamma(A)$ and an anchor $\rho : \varphi^!A \to  \varphi^! TM $ by $\rho(X)[F]:= \brg{X,F}$
for all $X \in \Gamma(A), F \in C^{\infty}(M)$.
\item Conversely, given a hom-Lie algebroid structure $(A \to M,\varphi, [\, ,\, ],\rho,\alpha) $, we define a
hom-Gerstenhaber bracket on $ \Gamma(\wedge^\bullet A)$,
for all $X_1, \dots, X_p, Y_1, \dots, Y_q \in \Gamma(A)$, $q \geq 1$,  $F \in C^\infty (M)$, by:
$$
\brg{X_1 \wedge \dots \wedge X_p, Y_1 \wedge \dots \wedge Y_q}= \sum_{i=1}^p \sum_{j=1}^q \hbox{\small{$(-1)^{i+j}$}} [X_i, Y_j] \wedge \al(X_1 \wedge \dots \widehat{X}_i  \wedge \dots \wedge X_p \wedge Y_1 \wedge \dots  \widehat{Y}_j \wedge \dots \wedge Y_q)
$$
and by
$$
\brg{X_1 \wedge \dots \wedge X_p, F} = \sum_{i=1}^p \hbox{\small{$(-1)^{i+1}$}} \rho(X_i) [F] \wedge \al (X_1 \wedge \dots \wedge \widehat{X}_i \wedge \dots \wedge X_p).
$$
\end{enumerate}
\end{thm}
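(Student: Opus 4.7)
The plan is to prove the two directions separately, treating each as a verification that the stated constructions satisfy the expected axioms, and then checking that these constructions are mutually inverse.

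For the direction from hom-Gerstenhaber to hom-Lie algebroid, most of the work is already contained in Proposition \ref{prop:gerstenhabergivesanchor}. From that proposition, $\mathcal{A}_0 = C^\infty(M)$ is a commutative associative algebra, $\left.\alpha\right|_{\mathcal{A}_0} = \varphi^*$ is an algebra automorphism, $\rho(X)$ is a $\varphi^*$-derivation, and $(\rho,\varphi^*)$ is a representation of the hom-Lie algebra $(\Gamma(A),\brr{\,,\,},\left.\alpha\right|_{\Gamma(A)})$. The compatibility $\alpha(FX) = \varphi^*(F)\alpha(X)$ comes from $\alpha$ being an algebra automorphism of $\wedge$, applied to $F\wedge X$. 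The hom-Leibniz identity of the algebroid is obtained by specializing the graded hom-Leibniz rule of the Gerstenhaber bracket to $X,Y' \in \mathcal{A}_1$ and $F \in \mathcal{A}_0$, writing $F \wedge Y'$ and reading off $[X,FY'] = \varphi^*(F)[X,Y'] + \rho(X)[F]\,\alpha(Y')$.

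The converse direction is where the substance lies. First I would check that the proposed formulas are well-defined: namely, that the right-hand sides depend only on the products $X_1 \wedge \cdots \wedge X_p$ and $Y_1 \wedge \cdots \wedge Y_q$, not on their factorizations, and that the action of multiplying one factor by a function $F \in C^\infty(M)$ is consistent. Here the hom-Leibniz identity of the algebroid, together with $\alpha(FX) = \varphi^*(F)\alpha(X)$, produces the needed cancellations. Next I would verify graded skew-symmetry directly from the formulas and the skew-symmetry of $\brr{\,,\,}$. The hom-Leibniz rule for the extended bracket then follows by a direct inductive count of which indices of the first or second argument get paired against $Z$ in a product $Y \wedge Z$.

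The hom-Jacobi identity for the extended bracket is the main obstacle and is handled by the strategy used in Example \ref{ex:gerstenhaber0}. One defines the hom-Jacobiator
$$
Jac_{\alpha}(X,Y,Z) := (-1)^{(i-1)(k-1)} \brg{\alpha(X),\brg{Y,Z}} + (-1)^{(j-1)(i-1)} \brg{\alpha(Y),\brg{Z,X}} + (-1)^{(k-1)(j-1)} \brg{\alpha(Z),\brg{X,Y}}
$$
on $\Gamma(\wedge^\bullet A)$ and shows, by a direct but cumbersome computation using the hom-Leibniz rule already established, that it satisfies the derivation-like identity \eqref{eq:Jacobiator}. Combined with graded skew-symmetry, this reduces vanishing of $Jac_{\alpha}$ to vanishing on the generators $\mathcal{A}_0 = C^\infty(M)$ and $\mathcal{A}_1 = \Gamma(A)$. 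On three elements of $\Gamma(A)$, vanishing is precisely the hom-Jacobi identity of the algebroid bracket; on two elements of $\Gamma(A)$ and one function, it is precisely the representation condition \eqref{rho:Jacobi} for $(\rho,\varphi^*)$; and on one element of $\Gamma(A)$ and two functions, it reduces to $\rho(X)$ being a $\varphi^*$-derivation together with \eqref{rho:A}.

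The hardest bookkeeping lies in establishing \eqref{eq:Jacobiator} in this hom-setting, where $\alpha$ is not a derivation of $\wedge$ but rather an automorphism satisfying $\alpha(FX) = \varphi^*(F)\alpha(X)$, and in ensuring that all sign conventions are consistent when factoring the hom-Leibniz rule through the triple product. Once these verifications are complete, the fact that the two constructions are mutually inverse is immediate from the formulas: extracting the bracket on $\Gamma(A)$ and the anchor from the Gerstenhaber bracket defined by a hom-Lie algebroid returns the original data, while the extension formulas applied to the data extracted from a hom-Gerstenhaber bracket recover the full bracket by the hom-Leibniz rule.
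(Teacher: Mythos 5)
Your proposal is correct and follows essentially the same route as the paper: Proposition \ref{prop:gerstenhabergivesanchor} handles the passage from hom-Gerstenhaber algebra to hom-Lie algebroid, and the converse is reduced, via well-definedness, the hom-Leibniz rule, and the Jacobiator identity \eqref{eq:Jacobiator}, to checking the hom-Jacobi identity on triples of generators (the paper checks only the three-section and two-sections-plus-a-function cases, the remaining ones vanishing for degree reasons rather than by the derivation property you invoke, but this is immaterial). Both you and the paper defer the same cumbersome direct computation establishing \eqref{eq:Jacobiator} in the hom-setting.
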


\begin{proof}
\emph{1}) We first need to justify our definition of $\rho$. It follows from the hom-Leibniz identity of the hom-Gerstenhaber algebra that
$F \mapsto \brg{X,F}$ is a $\varphi^*$-derivation and that $\brg{GX,F} = \varphi^*(G)\brg{X,F} $ for all $X \in \Gamma(A), F,G \in C^{\infty}(M)$.
Altogether, these properties imply that there is an unique vector bundle morphism $\rho: \varphi^!A \to  \varphi^! TM $
such that $\rho(X)[F]=\brg{X,F} $,  for all $X \in \Gamma(A), F \in C^{\infty}(M)$. Condition 1 in definition \ref{def:homLiealgebroid} holds by assumption.
Conditions 2 and 4 follow from proposition \ref{prop:gerstenhabergivesanchor}.
The hom-Leibniz identity of the hom-Gerstenhaber algebra:
 $$ \brg{X,FY}= \varphi^* F \brg{X,Y}+ \brg{X,F} \alpha(Y)  \quad \forall X,Y \in \Gamma(A), F \in C^\infty (M)$$
gives the condition 3 and proves the first item, since $\brg{X,F} = \rho(X)[F]$ by the very construction of $\rho$.

\noindent
\emph{2}) The hom-Leibniz identity, together with the facts that $\rho(FX)[G] = \varphi^*(F) \rho(X)[G]$ for all $F,G \in C^ \infty (M), X \in \Gamma(A)$ and that $F \mapsto \rho(X)[F]$ is a $\varphi^*$-derivation imply that $\brg{\, , \,} $ is well-defined. It also implies that it obeys to the hom-Leibniz identity. By equation (\ref{eq:Jacobiator}), it suffices to check the hom-Jacobi identity of $ \brg{\, ,\, }$
for triples made of three sections of $\Gamma(A)$ and triples made of two sections of $A$ together with one function on $M$. In the first case, it simply follows from the hom-Jacobi identity  of $\brr{\, ,\,}$ and in the second case, it is equivalent to the assumption that $(\rho,\varphi^*) $ is a representation of $
(\Gamma(A),\brr{\, ,\,},\al)$. This proves 2).

The constructions of both items are clearly inverse one to the other, and the theorem follows.
\end{proof}

The theorem shall in fact help us to construct examples of hom-Lie algebroids.

\begin{ex}
Recall \cite{McK} that a Gerstenhaber algebra structure $(\Gamma(\wedge^\bullet A ), \brg{\, ,\,} ,\wedge)$ is naturally associated to every Lie algebroid $(A,\brr{\, ,\,},\rho)$.
Given $\al:\Gamma(A) \to \Gamma(A) $ a linear endomorphism  of $\Gamma(A) $ satisfying $\alpha(FX) = \varphi^* (F) \alpha (X)$, for all $X \in \Gamma(A), F \in C^\infty(M)$ ($\varphi: M \to M$ being a given smooth map), an algebra endomorphism, again called $\al$, of
$(\Gamma(\wedge^\bullet A ),\wedge)$ can be construted as in  (\ref{eq:alphaextented}).
 This morphism $\alpha$ preserves the bracket $\brg{\, ,\,} $ provided that
$\alpha: \Gamma(A) \to \Gamma(A) $ is a Lie algebra morphism such that $\rho(\alpha(X))[\varphi^* F] = \varphi^*( \rho(X)[F])$
for all $X \in \Gamma(A), F \in C^\infty(M)$.
Example \ref{ex:GerstByCompo} then allows us to build a hom-Gerstenhaber algebra by composition, which, by theorem
\ref{theo:maintheorem}, yields a hom-Lie algebroid. Indeed, it suffices that $\brg{\, ,\,}$ satisfies the Jacobi identity on the image of $\al^2$.
For this, it suffices that  $\brg{\, ,\,}$ satisfies the Jacobi identity when applied to triples of the form $(\al^2(X),\al^2(Y),\al^2(Z))$
or $(\al^2(X),\al^2(Y),(\varphi^*)^2(F))$, with $X,Y,Z \in \Gamma(A), F \in C^\infty (M)$.
This means that $(A,\brr{\, ,\,},\rho)$ can be just assumed to be a pre-Lie algebroid (i.e. the Leibniz rule holds, but $\brr{\, ,\,}$
is not a Lie bracket) that satisfies the Jacobi identity on the image of $\al^2:\Gamma(A) \to \Gamma(A)$ and such that for all $X,Y \in \Gamma(A)$:
\begin{equation}\label{eq:conditionanchor} \rho(\brr{\al^2(X),\al^2(Y)}) - \brr{ \rho(\al^2 (X)) , \rho(\al^2 (Y)) } \end{equation}
is a vector field on $M$ that vanishes on the image of $\varphi^2 : M \to M$.
\end{ex}

We now describe two particular cases of the previous construction.

\begin{ex}
A vector field ${\mathcal V}\in \mathfrak{X}(M)$ induces a Lie algebroid as follows: $A_{\mathcal V} := M \times \mathbb{R}$ is the trivial line bundle, the Lie bracket of two sections $F,G \in C^\infty(M)=\Gamma(A_{\mathcal V})$ is given by $ [F,G] := F {\mathcal V} [G] - G {\mathcal V}[F]$
and the anchor of  $F \in C^\infty(M)=\Gamma(A_{\mathcal V})$ is the vector field $F {\mathcal V} $.
Let $\varphi: M \to M  $ be a smooth map preserving ${\mathcal V}$, i.e. $\varphi^* ({\mathcal V}[F])
= {\mathcal V}[\varphi^* F] $. Then $\varphi^*$ is a linear endomorphism of $C^\infty(M)=\Gamma(A_{\mathcal V})  $
which satisfies the required conditions to yield a hom-Lie algebroid by composition.
\end{ex}

\begin{ex}
Let $(M,\pi,\varphi)$  be a hom-Poisson manifold. Let $A = T^*M$, and let $\alpha$ be the pull-back morphism $\varphi^* : \Gamma(T^* M) \to \Gamma(T^* M)$. Then a pre-Lie algebroid structure \cite{Vaisman} is defined on $\Gamma(T^* M) $ by considering the anchor map $\rho := \pi^\# : T^*M \to TM$ together with the bracket
  $$[a,b]_{\pi} := {\mathcal L}_{\pi^\# a} b -   {\mathcal L}_{\pi^\# b} a + \diff \imath_\pi (a \wedge b).$$
It is immediate that $\varphi^*:  \Gamma(T^* M) \to \Gamma(T^* M)$ is an automorphism of this bracket and that
$\rho(\varphi^* a)[\varphi^* F] = \varphi^*( \rho(a)[F])$ for any $1$-form $a \in \Gamma(T^* M) $
and $ F \in C^\infty(M)$. Assuming the Schouten-Nijenhuis bracket $[\pi,\pi]$ to vanish on the image of $\varphi^2$ amounts to require that
$[\,,\,]_{\pi} $ satisfies the Jacobi identity when restricted to the image of $(\varphi^*)^2: \Gamma(T^* M) \to \Gamma(T^* M)$
and that condition (\ref{eq:conditionanchor}) holds. A hom-Lie algebroid can therefore be constructed by composition.
\end{ex}


\begin{thebibliography}{20}
\bibitem[BEM]{BEM} Bordemann M., Elchinger O. and Makhlouf A., \emph{Twisting Poisson algebras, coPoisson algebras
and Quantization}, to appear in Travaux Math\'ematiques.
\bibitem[E]{E} Elchinger, O.,\emph{ Th\`{e}se de doctorat}  - to be defended in november 2012.
\bibitem[HLS06]{HLS} Hartwig, J., Larsson, D., Silvestrov, S., \emph{Deformations of Lie algebras using $\sigma$-derivations}, J.  Algebra,  \textbf{295},~(2006).
\bibitem[KS]{KS} Kosmann-Schwarzbach, Y. \emph{ Modular vector fields and Batalin-Vilkovisky algebras}, Banach Center Publications,
\textbf{51}, 109-129,~(2000).
\bibitem [KSM1]{KSM1} Kosmann-Schwarzbach, Y., Magri, F., \emph{ Poisson-Nijenhuis structures}, Ann. Inst. Henri Poincar\'{e}, S\'{e}rie A, 53 (1990), 35-81.
\bibitem [KSM2]{KSM2} Kosmann-Schwarzbach, Y., Magri, F., \emph{ Dualization and deformation of Lie brackets on Poisson manifolds}, Differential Geometry and its Applications (Conf\'{e}rence internationale, Brno 1989), eds., J. Janyska and D. Krupka, World Scientific, Singapore, 1990, 79-84.
\bibitem[McK]{McK} MacKenzie, K., \emph{ General theory of Lie groupoids and Lie algebroids}, London Math. Soc. Lect. Note Series, \textbf{213},~(2005).
\bibitem[McKX]{McKX} MacKenzie, K., Xu, P., \emph{Lie bialgebroids and Poisson groupoids}, Duke Math. J. \textbf{73}, 415-452,~(1994).
\bibitem[MS1]{MS} Makhlouf, A., Silvestrov, S., \emph{Hom-algebra structures}, J. Gen. Lie Theory Appl.  \textbf{2}, 51-64,~(2008).
\bibitem[MS2]{MS2009} Makhlouf, A., Silvestrov, S.,  \emph{ Hom-Lie admissible Hom-coalgebras and Hom-Hopf algebras}, Generalized Lie theory in mathematics, physics and beyond,  189-206,~(2009).
\bibitem[MS3]{MS2} Makhlouf, A.,  Silvestrov, S.,  \emph{Notes on formal deformations of Hom-associative and Hom-Lie
algebras}, Forum Math, \textbf{22} (4), 715-759,~(2010).
\bibitem[MS4]{MS2010} Makhlouf, A., \emph{Hom-alternative algebras and Hom-Jordan algebras}. Int. Electron. J. Algebra  {\bf 8}, 177-190,~(2010).
\bibitem[Sheng]{Sh} Sheng, Y., \emph{Representations of hom-Lie algebras}, Algebra and Representation Theory,~(2011).
\bibitem[Vaisman]{Vaisman} Vaisman, I., \emph{Lectures on the Geometry of Poisson Manifolds}, Birkhauser,~(1994).
\bibitem[Xu]{Xu} Xu, P. \emph{Gerstenhaber algebra and BV-Algebras in Poisson geometry}, , \textbf{CMP}, 545-560,~(1999).
\bibitem[Yau1]{Yau09} Yau, D., \emph{Hom algebras and homology}, J. Lie Theory, \textbf{19}, 409-421,~(2009).
\bibitem[Yau2]{Yau10} Yau, D., \emph{Non-commutative hom-Poisson algebras}, arXiv:1010.3408,~(2010).
\end{thebibliography}
\end{document}